 \newcommand{\m}{\mathfrak{m} }
 \newcommand{\ov}{\overline}
  \newcommand{\im}{\operatorname{image}}
 \newcommand{\depth}{\operatorname{depth}}
  \newcommand{\lm}{\lambda}
\theoremstyle{plain}
 \newtheorem{theorem}{Theorem}[section]
 \newtheorem{corollary}[theorem]{Corollary}
 \newtheorem{lemma}[theorem]{Lemma}
 \newtheorem{proposition}[theorem]{Proposition}
 \theoremstyle{definition}
 \theoremstyle{remark}
\title[ ] {On the Chern number of  $I$-admissible filtrations of ideals}
\author{Mousumi Mandal and J. K. Verma}
\address{Dipartimento Di Matematica, Universita' Di Genova, Via Dodecaneso 35, 16146, Genova, Italy} \email{mandal@dima.unige.it}
\address{Department of Mathematics, Indian Institute of Technology 
Bombay, Powai, Mumbai 400076, India} \email{jkv@math.iitb.ac.in}
\begin{document}
\maketitle
\begin{abstract}
Let $I$ be an $\m$-primary ideal of a Noetherian local ring $(R, \m)$ of positive dimension. The coefficient $e_1(\mathcal I)$ of the Hilbert polynomial of an $I$-admissible filtration $\mathcal I$ is called the Chern number of $\mathcal I$. A formula  for the  Chern number has been derived involving Euler characteristic of subcomplexes of  a Koszul complex. Specific formulas for the Chern number have been given in local rings of dimension at most two. These have been used to provide new and unified proofs of several results about $e_1(\mathcal I)$.
\end{abstract}
\thispagestyle{empty}
\section*{Introduction}
 Let $(R,\m)$ be a Noetherian local ring of dimension $d$ and let $I$ be an $\m$-primary ideal. A sequence of ideals $\mathcal I=\{I_n\}_{n\in \mathbb Z}$ is called an 
 $I$-{\bf admissible filtration} if there exists a $k\in \mathbb N$ such that for all $n,m\in \mathbb Z,$  
$$ I_{n+1}\subseteq I_n, \; I_m I_n\subseteq I_{n+m}\;\; \mbox{and}\;\; I^n\subseteq I_n\subseteq I^{n-k}.$$  The Rees algebra  $\mathcal R (\mathcal I)$ and the associated graded ring $G(\mathcal I)$  of the filtration $\mathcal I$ are  defined as $$\mathcal R(\mathcal I)=\bigoplus_{n\in \mathbb Z}I_nt^n\;\; G(\mathcal I)=\bigoplus _{n\geq 0}I_n/I_{n+1}.$$ 
For the $I$-adic filtration $\mathcal I =\{I^n\},$ we put 
$\mathcal R(\mathcal I)=\mathcal R(I)$  and $G(\mathcal I)=G(I).$
Note that  $\mathcal I$ is an $I$-admissible filtration if and only if  $\mathcal R(\mathcal I)$ is a finitely generated $\mathcal R(I)$-module.    Rees  \cite{rees1} proved  that the integral closure 
filtration $\{\ov{I^n}\}$ is an  $I$-admissible filtration if and only if $R$ is analytically unramified. Marley  \cite{m} showed that if $\mathcal I$ is an $I$-admissible filtration then the {\bf Hilbert function} $H_{\mathcal I}(n)=\lm(R/I_n)$, where $\lm$ denotes length as $R$-module, coincides with a polynomial 
$P_{\mathcal I}(x)\in \mathbb Q[x]$ of degree $d$ for large $n$. This polynomial is written as 
 $$P_{\mathcal I}(x)=e_0(\mathcal I){x+d-1\choose d}-e_1(\mathcal{ I}){x+d-2\choose d-1}+\cdots +(-1)^de_d(\mathcal I)$$
and it is called the {\bf Hilbert polynomial} of the $I$-admissible filtration $\mathcal I$ and the uniquely determined integers $e_i(\mathcal I)$ 
for $i=0,\ldots ,d$ are called the {\bf Hilbert coefficients} of $\mathcal I$. The coefficient $e_1(\mathcal I)$ is called the {\bf Chern number} 
associated with $\mathcal I$.  A {\bf reduction} of an $I$-admissible filtration $\mathcal I=\{I_n\}_{n\in \mathbb Z}$ is an ideal $J\subseteq I_1$ such that $JI_n=I_{n+1}$ for all large $n$.
 Equivalently, $J\subseteq I_1$ is a reduction of $\mathcal I$ if and only if $R(\mathcal I)$ is a finite $R(J)$-module. A {\bf minimal reduction} of $\mathcal I$ is a reduction of $\mathcal I$ minimal with respect to containment. Recall that minimal reductions of admissible filtration always exists and generated by $d$ elements if $R/\m$ is infinite. In this paper we  assume that $R/\m$ is infinite.

The Chern number has traditionally been studied in Cohen-Macaulay local rings. The recent solutions by Goto et al 
\cite{gghopv,ghv, ghm} of conjectures of Vasconcelos   \cite{vas} for the Chern numbers of parameter ideals and integral closure filtrations require their understanding in arbitrary local rings. Therefore it is useful
to have versions of important formulas for the Chern numbers
in general.  We have chosen to find a general version of Huneke's fundamental lemma \cite{hun1} for this purpose. This lemma has played a crucial role in various studies of Hilbert polynomial. This version expresses the Chern number in terms of
Euler characteristics of a family of subcomplexes of the Koszul complex.  Unexpectedly, we  are able to apply this formula in dimension one
to derive unified and simpler proofs of several results about Hilbert polynomial in one-dimensional local rings. 
 
The paper  is organized as follows. In the first section  we establish a formula for the Chern number in any dimension  involving the Euler characteristic of subcomplexes of a  Koszul complex. In the second section we derive concrete versions of this formula  in dimension one and  provide unified proofs
 of results of Rees, Sally, Lipman and Huneke. In the third section we unify several results for the Chern number in local rings of dimension  two. Huenke's fundamental lemma is deduced as a consequence.

\section{A Formula for the Chern Number $e_1(\mathcal I)$}
In this section we give a formula for the Chern number of an $I$-admissible filtration in terms of the Euler Characteristics of subcomplexes of a Koszul complex.
Let $(R,\m)$ be a $d$-dimensional local ring and $I$ be an $\m$-primary ideal. Let $\{I_n\}_{n\in \mathbb Z}$ be an $I$-admissible filtration. 
Let $\underbar x= x_1,\ldots ,x_d\in I$ be a minimal reduction of $\mathcal I$. In order to prove the main theorem we define for $n\in \mathbb Z$ the complex $K^{(n)}(\underbar x,\mathcal I)$ which is a sub-complex of the Koszul complex given by
$$0\longrightarrow I_{n-d}\longrightarrow (I_{n-d+1})^{d\choose 1}\longrightarrow \cdots \longrightarrow (I_{n-1})^{d\choose d-1}\longrightarrow I_{n}\longrightarrow 0.$$ 

The Euler characteristic of $K^{(n)}(\underbar
 x,\mathcal I)$ is defined as $$\chi(K^{(n)}(\underbar x,\mathcal I))=\sum_{i=0}^d(-1)^i\lm(H_i(K^{(n)}(\underbar x,\mathcal I))).$$

\noindent
For a numerical function $f: \mathbb Z \rightarrow \mathbb Z$ we put 
$ (\bigtriangleup f)(n)=f(n)-f(n-1).$
\begin{theorem}\label{cor1}
Let $(R,\m)$ be a $d$-dimensional local ring. Let $I$  be an $\m$-primary ideal and $\mathcal I=\{I_n\}_{n\in \mathbb Z}$ be an $I$-admissible filtration. Let $\underbar x=(x_1,\ldots, x_d)\subseteq I$ be a minimal reduction of $\mathcal I$. Then
$$e_1(\mathcal I)=\sum_{n=1}^{\infty}\chi(K^{(n)}(\underbar x,\mathcal I)).$$

%$$e_1(\mathcal I)=(d-1)e_0(\mathcal I)+\sum_{n=0}^{\infty}\chi(K^{(n)}(\mathcal I))-\sum_{i=0}^{d-1}\sum_{j=0}^{d-i-1}(-1)^i{d\choose i}H_{\mathcal I}(j).$$
\end{theorem}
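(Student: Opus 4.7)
The plan is to realize $K^{(n)}(\underbar x,\mathcal I)$ as a subcomplex of the ordinary Koszul complex $K(\underbar x;R)$, extract a closed formula for $\chi(K^{(n)}(\underbar x,\mathcal I))$ via additivity of Euler characteristics on short exact sequences of complexes, and then telescope the resulting expression over $n$.

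First, I would observe that the filtration property $I_1 I_{n-i}\subseteq I_{n-i+1}$ ensures the Koszul differentials on $R^{\binom{d}{i}}$ restrict to give an inclusion of complexes $K^{(n)}(\underbar x,\mathcal I)\hookrightarrow K(\underbar x;R)$, with quotient $C^{(n)}$ whose $i$-th term is $(R/I_{n-i})^{\binom{d}{i}}$. Adopting the convention $I_n=R$ for $n\le 0$ (forced by $I^n\subseteq I_n$), each term of $C^{(n)}$ has finite length. Since $(\underbar x)$ is a minimal reduction of an $\m$-primary ideal, $\underbar x$ is a system of parameters, so every $H_i(\underbar x;R)$ is killed by $(\underbar x)$ and hence has finite length; the long exact sequence in homology then forces every $H_i(K^{(n)}(\underbar x,\mathcal I))$ to have finite length as well. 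Additivity of $\chi$ on the short exact sequence gives
$$\chi(K^{(n)}(\underbar x,\mathcal I))=\chi(\underbar x;R)-\chi(C^{(n)}).$$

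Next I would identify each side. A theorem of Serre identifies $\chi(\underbar x;R)$ with the Samuel multiplicity $e(\underbar x;R)$, and the reduction hypothesis together with the squeeze $(\underbar x)^n\subseteq I_n\subseteq I^{n-k}$ for large $n$ forces $e(\underbar x;R)=e_0(\mathcal I)$. For $C^{(n)}$, finite length of its terms lets me compute $\chi$ directly from the terms,
$$\chi(C^{(n)})=\sum_{i=0}^d(-1)^i\binom{d}{i}\lm(R/I_{n-i})=(\bigtriangleup^d f)(n),\qquad f(n):=\lm(R/I_n).$$
Using the identity $\bigtriangleup^d P_\mathcal I(n)\equiv e_0(\mathcal I)$ for every $n$, this rewrites as $\chi(K^{(n)}(\underbar x,\mathcal I))=\bigtriangleup^d H(n)$, where $H(n):=P_\mathcal I(n)-f(n)$.

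Finally I would telescope. By Marley's theorem $H(n)=0$ for $n\gg 0$, and the admissibility convention gives $H(n)=P_\mathcal I(n)$ for $n\le 0$; telescoping $\sum_{n=1}^\infty \bigtriangleup^d H(n)$ in the backward difference $d$ times yields $-\bigtriangleup^{d-1}H(0)=-\bigtriangleup^{d-1}P_\mathcal I(0)$. A direct computation in the binomial basis shows $\bigtriangleup^{d-1}P_\mathcal I(x)=e_0(\mathcal I)\,x-e_1(\mathcal I)$ (since $\bigtriangleup^{d-1}$ kills the binomial coefficients $\binom{x+d-1-j}{d-j}$ for $j\ge 2$), so $-\bigtriangleup^{d-1}P_\mathcal I(0)=e_1(\mathcal I)$, completing the argument. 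The most delicate step will be the identification $\chi(\underbar x;R)=e_0(\mathcal I)$, which combines Serre's identification of the Koszul Euler characteristic with the Samuel multiplicity and the reduction-theoretic fact that $e_0$ is a numerical invariant of the reduction class; a secondary concern is the bookkeeping of finite lengths that keeps every Euler characteristic in the argument well defined and the long exact sequence applicable.
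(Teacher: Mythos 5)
Your proposal is correct and takes essentially the same route as the paper: the same short exact sequence $0\to K^{(n)}(\underbar x,\mathcal I)\to K_.(\underbar x)\to C_.(n,\mathcal I)\to 0$, Serre's theorem identifying $\chi(K_.)$ with $e_0(\mathcal I)$ via the reduction, and the resulting key identity $\bigtriangleup^d\left[P_{\mathcal I}(n)-H_{\mathcal I}(n)\right]=\chi(K^{(n)}(\underbar x,\mathcal I))$. The only (minor) difference is at the last step, where the paper cites Huckaba's Proposition 2.9 for $e_1(\mathcal I)=\sum_{n\geq 1}\bigtriangleup^d\left[P_{\mathcal I}(n)-H_{\mathcal I}(n)\right]$, while you reprove that summation identity directly by telescoping, using $P_{\mathcal I}(n)=H_{\mathcal I}(n)$ for $n\gg 0$ and $I_n=R$ for $n\leq 0$; your computation is valid.
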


\begin{proof}
Consider for $n\in \mathbb Z$ the modified Koszul complex $C_.(n,\mathcal I)$ introduced in \cite{hm} as
 \begin{equation*}
 0\longrightarrow R/I_{n-d}\longrightarrow \left( R/I_{n-d+1}\right)^{d \choose 1}\longrightarrow \cdots \longrightarrow \left( R/I_{n}\right)\longrightarrow 0.
\end{equation*}
with $I_n=R$ for $n\leq 0$ and the differentials are induced by the differentials of the Koszul complex $K_.=K_.(\underbar x)$.
 Then we get
{\small \begin{equation*}
  H_{\mathcal I}\left( n\right) -{d\choose 1}H_{\mathcal I}\left(n-1\right) +\cdots +(-1)^dH_{\mathcal I}\left( n-d\right) =\displaystyle{\sum_{i=0}^d(-1)^{i}\lm(H_i(C_.(n,\mathcal I)))}
 \end{equation*}}
%Let $$\chi (C_.(n,\mathcal I))=\displaystyle{\sum_{i=0}^d\lm(H_i(C_.(n,\mathcal I)))(-1)^i}.$$ 
Notice that $\bigtriangleup ^d P_{\mathcal I}(n)=e_0(\mathcal I)$ and 
 $$\bigtriangleup ^d H_{\mathcal I}\left( n\right) =H_{\mathcal I}\left( n\right) -{d\choose 1}H_{\mathcal I}\left(n-1\right) +\cdots +(-1)^dH_{\mathcal I}\left( n-d\right) .$$
 Therefore $$\bigtriangleup^d\left[ P_{\mathcal I}\left( n\right) -H_{\mathcal I}\left(n\right) \right] =e_0\left( \mathcal I \right) -\chi(C_.(n,\mathcal I)).$$
 By Serre's Theorem \cite[Theorem 4.7.6]{bh} we have $e_0(\mathcal I)=\chi(K_.)$. Therefore  $$\bigtriangleup^d\left[ P_{\mathcal I}\left( n\right) -H_{\mathcal I}\left(n\right) \right] =\chi(K_.) -\chi(C_.(n,\mathcal I)).$$
  Consider the exact sequence of complexes 
 \begin{equation}\label{eq11}
 0\longrightarrow K^{(n)}(\underbar x,\mathcal I)\longrightarrow K_.(\underbar x)\longrightarrow C_.(n,\mathcal I)\longrightarrow 0
 \end{equation}
%  Hence from the exact sequence (\ref{eq11}) we have
Then from the exact sequence (\ref{eq11}) we have $$\chi(K_.)=\chi(K^{(n)}(\underbar x,\mathcal I))+\chi(C_.(n,\mathcal I)).$$
Therefore  
\begin{equation}\label{lm}
\bigtriangleup^d\left[ P_{\mathcal I}\left( n\right) -H_{\mathcal I}\left(n\right) \right] =\chi(K^{(n)}(\underbar x, \mathcal I)).
\end{equation}
%where for $n\in \mathbb Z$ the modified Koszul complex $C_.(n,\mathcal I)$ is defined as
% \begin{equation}\label{eqb}
% 0\longrightarrow R/I_{n-d}\longrightarrow \left( R/I_{n-d+1}\right)^{d \choose 1}\longrightarrow \cdots \longrightarrow \left( R/I_{n-1} \right)^{d \choose d-1} \longrightarrow \left( R/I_{n}\right)\longrightarrow 0
%\end{equation}
%with $I_n=R$ for $n\leq 0$ and the differentials are induced by the differentials of the Koszul complex $K_.=K_.(x_1,\ldots, x_d;R)$.
From Proposition 2.9 of \cite{huc} it follows that 
$$e_1(\mathcal I)=\displaystyle{\sum_{n=1}^{\infty}\bigtriangleup^d[P_{\mathcal I}(n)-H_{\mathcal I}(n)]}=\sum_{n=1}^{\infty}\chi(K^{(n)}(\underbar x,\mathcal I)).$$
\end{proof}

\section{Applications  of the Formula in dimension one}
In this section we explicitly write down the formula for the Chern number in one dimensional local rings as consequences of Theorem \ref{cor1}. As applications we unify several results in dimension one.  The next result was proved for Cohen-Macaulay local rings in \cite{hm}.
\begin{theorem}\label{1thm}
 Let $(R,\m)$ be a $1$-dimensional local ring and $I$ be an $\m$-primary ideal and $\mathcal I=\{I_n\}_{n\in \mathbb Z}$ be an $I$-admissible filtration. Let $J=(x)\subseteq I_1$ be a minimal reduction. Then
 $$e_1(\mathcal I)=\sum_{n=1}^{\infty}[\lm(I_{n}/JI_{n-1})-\lm((0:x)\cap I_{n-1})].$$
 In particular, if $R$ is Cohen-Macaulay then  $$e_1(\mathcal I)=\sum_{n=1}^{\infty}\lm(I_{n}/JI_{n-1}).$$
\end{theorem}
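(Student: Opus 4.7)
The plan is to reduce this to a direct application of Theorem \ref{cor1} in the case $d=1$. When $d=1$ and the minimal reduction is $J=(x)\subseteq I_1$, the subcomplex $K^{(n)}(\underbar x,\mathcal I)$ collapses to the two-term complex
\[
0\longrightarrow I_{n-1}\xrightarrow{\,\cdot x\,} I_n\longrightarrow 0,
\]
where the differential is multiplication by $x$, induced from the Koszul differential of $K_.(x)$. So the first step is simply to write down this complex and verify the shape matches the general definition given before Theorem \ref{cor1}.

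The second step is to compute the two homology modules directly from this description. In degree $0$ the cokernel of multiplication by $x$ on $I_{n-1}\to I_n$ is $I_n/xI_{n-1}=I_n/JI_{n-1}$, and in degree $1$ the kernel of the same map is $\{a\in I_{n-1}:xa=0\}=(0:_R x)\cap I_{n-1}$. Hence
\[
\chi\bigl(K^{(n)}(x,\mathcal I)\bigr)=\lambda(I_n/JI_{n-1})-\lambda\bigl((0:x)\cap I_{n-1}\bigr).
\]
Summing over $n\geq 1$ and invoking Theorem \ref{cor1} produces the claimed formula for $e_1(\mathcal I)$. Note that each summand has finite length: since $\mathcal I$ is $I$-admissible and $I$ is $\m$-primary, $I_n\subseteq I^{n-k}$ for some $k$, so $(0:x)\cap I_{n-1}\subseteq (0:x)\cap I^{n-1-k}$, which vanishes for $n\gg 0$ by the Artin-Rees lemma (applied to the submodule $(0:x)$ inside $R$), and likewise $\lambda(I_n/JI_{n-1})$ is eventually zero since $J$ is a reduction of $\mathcal I$; so the infinite sum is in fact a finite one.

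For the Cohen-Macaulay addendum, the point is that in a one-dimensional Cohen-Macaulay local ring every parameter is a nonzerodivisor. Since $(x)$ is a minimal reduction of the $\m$-primary filtration $\mathcal I$, the element $x$ is a parameter, hence $(0:x)=0$, so the correction term $\lambda((0:x)\cap I_{n-1})$ drops out and the formula simplifies to $e_1(\mathcal I)=\sum_{n\geq 1}\lambda(I_n/JI_{n-1})$.

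Because Theorem \ref{cor1} has already absorbed all the real work (Serre's theorem, the modified Koszul complex, and the Proposition 2.9 identity from \cite{huc}), the only genuine obstacle here is the bookkeeping for well-definedness of the infinite sum in the non-Cohen-Macaulay case; this is handled by the admissibility of $\mathcal I$ together with Artin-Rees as indicated, and no further ingredient is needed.
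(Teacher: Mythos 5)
Your proposal is correct and follows the same route as the paper: specialize Theorem \ref{cor1} to $d=1$, identify $H_0(K^{(n)}(x,\mathcal I))=I_n/JI_{n-1}$ and $H_1(K^{(n)}(x,\mathcal I))=(0:x)\cap I_{n-1}$, sum the Euler characteristics, and drop the correction term in the Cohen-Macaulay case since a parameter is then a nonzerodivisor. The extra remarks on finiteness of the sum are harmless but not needed, as the vanishing of $\bigtriangleup^d[P_{\mathcal I}(n)-H_{\mathcal I}(n)]$ for large $n$ is already built into Theorem \ref{cor1}.
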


\begin{proof}
 From Theorem \ref{cor1} we have
 $$e_1(\mathcal I)=\sum_{n=1}^{\infty}\chi(K^{(n)}(\mathcal I))$$
 where the complex $K^{(n)}(\mathcal I)$ is given as 
 $$0\longrightarrow I_{n-1}\xrightarrow{x} I_{n}\longrightarrow 0.$$
 Note that $$H_0(K^{(n)}(\mathcal I))=I_{n}/JI_{n-1}$$ and $$H_1((K^{(n)}(\mathcal I))=(0:x)\cap I_{n-1}.$$
 Hence $$e_1(\mathcal I)=\sum_{n=1}^{\infty}[\lm(I_{n}/JI_{n-1})-\lm((0:x)\cap I_{n-1})].$$
  If $R$ is Cohen-Macaulay, then $x$ is a regular element and so $(0:x)=0$. Hence we have
  $$e_1(\mathcal I)=\sum_{n=1}^{\infty}\lm(I_{n}/JI_{n-1}).$$
\end{proof}

The next result is a deep theorem of Rees valid in any dimension.  The proof given here in dimension one for admissible filtrations is new and straightforward.
\begin{theorem}{\rm \cite[Rees]{rees}}
Let $(R,\m)$ be a $1$-dimensional local ring. Let $I$ be an $\m$-primary ideal and $\mathcal I=\{I_n\}_{n\in \mathbb Z}$ be an $I$-admissible filtration. Let $J\subseteq I_1$ be a parameter ideal such that $e_0(J)=e_0(\mathcal I)$. Then $J$ is a reduction of $\mathcal I$.
\end{theorem}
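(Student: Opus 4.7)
The plan is to show that $I_n = xI_{n-1}$ for all sufficiently large $n$, which is exactly the condition for $J = (x)$ to be a reduction of $\mathcal{I}$. Because $\dim R = 1$, both Hilbert polynomials $P_{\mathcal{I}}(n) = e_0(\mathcal{I})n - e_1(\mathcal{I})$ and $P_J(n) = e_0(J)n - e_1(J)$ are linear and agree with their Hilbert functions $\lm(R/I_n)$ and $\lm(R/J^n)$ for $n \gg 0$. Since $J \subseteq I_1$ implies $J^n \subseteq I_1^n \subseteq I_n$, the difference
$$\lm(I_n/J^n) = \lm(R/J^n) - \lm(R/I_n) = \bigl(e_0(J) - e_0(\mathcal I)\bigr)n + \bigl(e_1(\mathcal I) - e_1(J)\bigr)$$
stabilizes, under the hypothesis $e_0(J) = e_0(\mathcal I)$, to the constant value $N := e_1(\mathcal I) - e_1(J) \geq 0$ for all large $n$.

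The next step is to factor this stable length through the intermediate ideal $xI_{n-1}$ via the chain $J^n = xJ^{n-1} \subseteq xI_{n-1} \subseteq I_n$, giving
$$\lm(I_n/J^n) = \lm(I_n/xI_{n-1}) + \lm(xI_{n-1}/J^n).$$
Multiplication by $x$ provides a surjection $I_{n-1}/J^{n-1} \twoheadrightarrow xI_{n-1}/J^n$, and an application of the modular law using $J^{n-1} \subseteq I_{n-1}$ identifies its kernel with $\bigl((0:x) \cap I_{n-1}\bigr)/\bigl((0:x) \cap J^{n-1}\bigr)$.

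The technical heart of the proof is to show this kernel vanishes for all large $n$. Because $x$ is a parameter in a one-dimensional local ring, no minimal prime of dimension one contains $x$, so $\Supp((0:x)) = \{\m\}$ and $(0:x)$ has finite length. Combining $I_n \subseteq I^{n-k}$ with Krull's intersection theorem gives $\bigcap_n I_n \subseteq \bigcap_m I^m = 0$, and intersecting with the Artinian module $(0:x)$ forces $(0:x) \cap I_{n-1} = 0$ for $n \gg 0$. Hence the surjection becomes an isomorphism and $\lm(xI_{n-1}/J^n) = \lm(I_{n-1}/J^{n-1}) = N$; substituting into the decomposition yields $\lm(I_n/xI_{n-1}) = N - N = 0$, as required.

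The main obstacle is the vanishing $(0:x) \cap I_{n-1} = 0$, which requires both the parameter hypothesis on $x$ and the admissibility of $\mathcal{I}$; the rest is routine bookkeeping on lengths. This strategy is close in spirit to Theorem \ref{1thm}: its formula $e_1(\mathcal{I}) = \sum_n [\lm(I_n/xI_{n-1}) - \lm((0:x) \cap I_{n-1})]$ reduces the whole question to showing that the non-negative integer terms $\lm(I_n/xI_{n-1})$ must eventually vanish, which provides an alternative packaging of the same argument.
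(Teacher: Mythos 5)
Your proof is correct and is essentially the paper's argument in different packaging: the paper's two-term complex $0\to I_{n-1}/J^{n-1}\xrightarrow{\ x\ } I_n/J^n\to 0$ is exactly your multiplication-by-$x$ map, its $H_0$ and $H_1$ are your cokernel $I_n/JI_{n-1}$ and kernel $\bigl((0:x)\cap I_{n-1}\bigr)/\bigl((0:x)\cap J^{n-1}\bigr)$, and the hypothesis $e_0(J)=e_0(\mathcal I)$ enters in both cases through the eventual constancy of $\lm(I_n/J^n)$. The only (harmless) deviation is the final vanishing of $(0:x)\cap I_{n-1}$, which you get from finite length of $(0:x)$ together with Krull's intersection theorem, whereas the paper uses $(0:x)\subseteq H^0_{\m}(R)$ and the Artin--Rees lemma; both are valid, and yours is arguably a bit more economical.
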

\begin{proof}
 Let $J=(x)\subseteq I_1$.
 Consider the complex
 $$C_.(n,J,\mathcal I):~~~0\longrightarrow I_{n-1}/J^{n-1} \xrightarrow{x} I_{n}/J^{n}\longrightarrow 0.$$
 Let $H_{\mathcal I/J}(n)=\lm(I_n/J^n)$ be the Hilbert function and $P_{\mathcal I/J}(n)$ denote the corresponding Hilbert polynomial. 
 Note that 
 $$\bigtriangleup[P_{\mathcal I/J}(n)-H_{\mathcal I/J}(n)]=e_0(\mathcal I/J)-\lm(H_0(C_.(n,J,\mathcal I)))+\lm(H_1(C_.(n,J,\mathcal I))).$$
 Since $e_0(\mathcal I)=e_0(J)$, $e_0(\mathcal I/J)=0$. Hence for large $n$ we have
 $$\lm(H_1(C_.(n,J,\mathcal I)))-\lm(H_0(C_.(n,J,\mathcal I)))=0.$$
 Notice that $$H_0(C_.(n,J,\mathcal I))=\dfrac{I_{n}}{JI_{n-1}}$$ and $$H_1(C_.(n,J,\mathcal I))=\dfrac{(J^{n}:x)\cap I_{n-1}}{J^{n-1}}.$$
 Observe that $(J^{n}:x)=J^{n-1}+(0:x)$. Hence we have
 \begin{eqnarray*}
  \dfrac{(J^{n}:x)\cap I_{n-1}}{J^{n-1}}&=& \dfrac{J^{n-1}+(0:x)\cap I_{n-1}}{J^{n-1}}\\
  &=& \dfrac{(0:x)\cap I_{n-1}}{(0:x)\cap J^{n-1}}
 \end{eqnarray*}
By Artin-Rees Lemma, $$(0:x)\cap I_n\subseteq H^0_{\m}(R)\cap I_n=I_{n-n_0}(I_{n_0}\cap H^0_{\m}(R))$$ for some $n_0$ and for all $n\geq n_0$. 
Since $\lm(H^0_{\m}(R))<\infty$, for large $n$ we have $$I_{n-n_0}(I_{n_0}\cap H^0_{\m}(R))=0.$$ Hence for large $n$, $I_{n}=JI_{n-1}$. Thus $J$ is a reduction of $\mathcal I$.
\end{proof}

\begin{theorem}{\rm \cite[Theorem 1.9]{l}}
 Let $(R,\m)$ be a $1$-dimensional Cohen-Macaulay ring and $I$ be an $\m$-primary ideal and $\mathcal I=\{I_n\}_{n\in \mathbb Z}$ be an 
$I$- admissible filtration. Let $J=(x)$ be a minimal reduction of $\mathcal I$. Then for all $n\geq 1$
 $$\lm(I_{n-1}/I_{n})\leq e_0(\mathcal I) $$ and
 $$\lm(I_{n-1}/I_{n})= e_0(\mathcal I) \mbox { if and only if } I_{n}=xI_{n-1}.$$ 
\end{theorem}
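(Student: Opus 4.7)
The plan is to reduce both assertions to the single length identity $\lm(I_{n-1}/xI_{n-1}) = e_0(\mathcal I)$, valid for every $n\geq 1$, together with the chain $xI_{n-1}\subseteq I_n\subseteq I_{n-1}$ supplied by admissibility.

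First I would observe that $xI_{n-1}\subseteq I_n$, since $x\in J\subseteq I_1$ and $I_1 I_{n-1}\subseteq I_n$ by the multiplicative property of $\mathcal I$. Combined with $I_n\subseteq I_{n-1}$, additivity of length immediately gives
\[
\lm(I_{n-1}/xI_{n-1}) \;=\; \lm(I_{n-1}/I_n) + \lm(I_n/xI_{n-1}).
\]

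The central step is the computation $\lm(I_{n-1}/xI_{n-1})=e_0(\mathcal I)$. For this I would tensor the short exact sequence $0\to I_{n-1}\to R\to R/I_{n-1}\to 0$ with $R/(x)$ to obtain the four-term exact sequence
\[
0 \longrightarrow \Tor_1^R(R/(x),R/I_{n-1}) \longrightarrow I_{n-1}/xI_{n-1} \longrightarrow R/(x) \longrightarrow R/(x,I_{n-1}) \longrightarrow 0.
\]
Since $R$ is Cohen-Macaulay of dimension one, $x$ is a regular element on $R$, so $\Tor_1^R(R/(x),R/I_{n-1}) = (I_{n-1}:x)/I_{n-1}$ is precisely the kernel of multiplication by $x$ on the finite-length module $R/I_{n-1}$, whose cokernel has length $\lm(R/(x,I_{n-1}))$. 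Because the kernel and cokernel of any endomorphism of a finite-length module share the same length, the two outer terms cancel in the alternating sum of lengths, yielding
\[
\lm(I_{n-1}/xI_{n-1}) \;=\; \lm(R/(x)) \;=\; e_0(J) \;=\; e_0(\mathcal I),
\]
the last equality holding because $J$ is a minimal reduction of $\mathcal I$.

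Combining the two identities gives $\lm(I_{n-1}/I_n)+\lm(I_n/xI_{n-1}) = e_0(\mathcal I)$, from which both the bound $\lm(I_{n-1}/I_n)\leq e_0(\mathcal I)$ and the characterization of equality (holding precisely when $I_n=xI_{n-1}$) drop out at once. I do not anticipate any serious obstacle; the only point that requires a line of verification is the Tor identification and the kernel–cokernel length equality, both routine once one uses CM-ness of $R$ and finiteness of $\lm(R/I_{n-1})$.
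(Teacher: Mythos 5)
Your argument is correct, but it takes a different route from the paper. The paper deduces the theorem from its general Chern number formula: in dimension one, Equation (2) of Section 1 (the $d=1$ case of the Koszul--Euler characteristic identity) gives $\bigtriangleup[P_{\mathcal I}(n)-H_{\mathcal I}(n)]=\lm(I_n/JI_{n-1})$ in the Cohen--Macaulay case, which rearranges at once to $\lm(I_{n-1}/I_n)+\lm(I_n/JI_{n-1})=e_0(\mathcal I)$; the point of the paper is precisely that this one identity yields Lipman's theorem (and the subsequent results of Rees, Huneke and Sally) uniformly. You reach the same key identity $\lm(I_{n-1}/I_n)+\lm(I_n/xI_{n-1})=e_0(\mathcal I)$ directly, by splitting the chain $xI_{n-1}\subseteq I_n\subseteq I_{n-1}$ and computing $\lm(I_{n-1}/xI_{n-1})=\lm(R/xR)=e_0(J)=e_0(\mathcal I)$ via the Tor sequence and the kernel--cokernel length equality; this is elementary and self-contained, needing only that $x$ is regular, that $\lm(R/xR)=e_0((x))$ in a one-dimensional Cohen--Macaulay ring, and that a reduction preserves $e_0$ (the last two facts you invoke without proof, but they are standard). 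In fact your central computation can be shortened: since $x$ is regular, $xR/xI_{n-1}\cong R/I_{n-1}$, so comparing the two filtrations $xI_{n-1}\subseteq I_{n-1}\subseteq R$ and $xI_{n-1}\subseteq xR\subseteq R$ gives $\lm(I_{n-1}/xI_{n-1})=\lm(R/xR)$ without Tor. What your approach buys is independence from the machinery of Section 1; what the paper's buys is the unified derivation from a single formula valid for all admissible filtrations, which is its stated purpose.
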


\begin{proof}
 From Equation (\ref{lm}) we have
 $$\bigtriangleup[P_{\mathcal I}(n)-H_{\mathcal I}(n)]=\lm(I_{n}/JI_{n-1})$$
 which implies $$e_0(\mathcal I)-\lm(R/I_{n})+\lm(R/I_{n-1})=\lm(I_{n}/JI_{n-1}).$$
 Thus we have $\lm(I_{n-1}/I_{n})+\lm(I_{n}/JI_{n-1})=e_0(\mathcal I)$. Hence $\lm(I_{n-1}/I_{n})\leq e_0(\mathcal I)$ for $n\geq 1$ and equality holds if and only if $I_{n}=JI_{n-1}$. 
 
\end{proof}

\begin{theorem}{\rm \cite[Theorem 2.1]{hun1}}
  Let $(R,\m)$ be a $1$-dimensional Cohen-Macaulay ring and $I$ be an $\m$-primary ideal and $\mathcal I=\{I_n\}_{n\in \mathbb Z}$ be an $I$-admissible filtration. Assume that $$e_1(\mathcal I)=e_0(\mathcal I)-\lm(R/I_1).$$ Then
 \begin{enumerate}
  \item[{\rm (1)}] $H_{\mathcal I}(n)=P_{\mathcal I}(n)$ for all $n\geq 1$.
  \item[{\rm (2)}] If $(x)$ is a minimal reduction of $\mathcal I$ then $I_n=xI_{n-1}$ for all $n\geq 2$.
 \end{enumerate}
\end{theorem}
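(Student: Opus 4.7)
The plan is to combine the Cohen--Macaulay one-dimensional formula from Theorem \ref{1thm}, namely $e_1(\mathcal I)=\sum_{n\geq 1}\lm(I_n/xI_{n-1})$, with Lipman's identity from the preceding theorem, $\lm(I_{n-1}/I_n)+\lm(I_n/xI_{n-1})=e_0(\mathcal I)$, evaluated at $n=1$. The admissibility condition $I^0\subseteq I_0\subseteq I^{-k}$ forces $I_0=R$, so Lipman's identity at $n=1$ reads
$$\lm(R/I_1)+\lm(I_1/J)=e_0(\mathcal I),$$
which, together with the hypothesis $e_1(\mathcal I)=e_0(\mathcal I)-\lm(R/I_1)$, gives $\lm(I_1/J)=e_1(\mathcal I)$.

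Next I would feed this into the series formula. The identity $e_1(\mathcal I)=\sum_{n\geq 1}\lm(I_n/xI_{n-1})$ expresses $e_1(\mathcal I)$ as a sum of non-negative integers whose $n=1$ term already equals the total. Non-negativity therefore forces every summand for $n\geq 2$ to vanish, giving $\lm(I_n/xI_{n-1})=0$, i.e. $I_n=xI_{n-1}$, for all $n\geq 2$. This proves part (2).

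For part (1), I would return to Equation (\ref{lm}) specialised to $d=1$ in the Cohen--Macaulay setting, namely
$$\bigtriangleup[P_{\mathcal I}(n)-H_{\mathcal I}(n)]=\chi(K^{(n)}(x,\mathcal I))=\lm(I_n/xI_{n-1}).$$
By part (2) the right-hand side is zero for all $n\geq 2$, so the function $n\mapsto P_{\mathcal I}(n)-H_{\mathcal I}(n)$ is constant on $\{n\geq 1\}$. Since $H_{\mathcal I}(n)=P_{\mathcal I}(n)$ for all sufficiently large $n$, this common value must be $0$, and hence $H_{\mathcal I}(n)=P_{\mathcal I}(n)$ for every $n\geq 1$.

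No step presents a real obstacle: the entire argument rests on the single observation that the hypothesis is exactly the statement that the $n=1$ term of the Chern-number series accounts for the whole sum, whereupon non-negativity collapses everything. The rest is extracting (1) from the already-established first-difference identity.
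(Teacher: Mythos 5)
Your proof is correct and takes essentially the same route as the paper: part (2) comes from Theorem \ref{1thm} once one knows the $n=1$ term $\lm(I_1/(x))$ already equals $e_1(\mathcal I)$ (you get this from Lipman's identity at $n=1$, the paper from $e_0(\mathcal I)=\lm(R/(x))$, which is the same computation), and part (1) comes from Equation (\ref{lm}). Your only cosmetic deviation is fixing the constant value of $P_{\mathcal I}-H_{\mathcal I}$ on $n\geq 1$ by its eventual vanishing instead of computing $P_{\mathcal I}(1)-H_{\mathcal I}(1)=0$ directly.
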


\begin{proof}
 From Theorem \ref{1thm} we have $e_1(\mathcal I)=\displaystyle{\sum_{n\geq 1}}\lm\left( I_{n}/JI_{n-1}\right)$. Since $$e_1(\mathcal I)=
e_0(\mathcal I)-\lm(R/I_1)=\lm(R/J)-\lm(R/I_1)=\lm(I_1/J),$$ we get $\displaystyle{\sum_{n\geq 2}}\lm\left( I_{n}/JI_{n-1}\right)=0.$ Hence $I_n=JI_{n-1}$
for all $n\geq 2$.
 From  Equation (\ref{lm}) we have $$\bigtriangleup [P_{\mathcal I}(n)-H_{\mathcal I}(n)]=0$$ for $n\geq 2$. Hence $$P_{\mathcal I}(n)-H_{\mathcal I}(n)=
P_{\mathcal I}(n-1)-H_{\mathcal I}(n-1).$$ Notice that $$P_{\mathcal I}(1)-H_{\mathcal I}(1)=e_0(\mathcal I)-e_1(\mathcal I)-\lm(R/I_1)=0.$$
 Hence $H_{\mathcal I}(n)=P_{\mathcal I}(n)$ for all $n\geq 1$.
\end{proof}

\begin{theorem}{\rm \cite[Proposition 3.2]{s}}
 Let $(R,\m)$ be a $1$-dimensional Cohen-Macaulay ring and $I$ be an $\m$-primary ideal. Assume that $$e_1(I)=e_0(I)-\lm(R/I)+1.$$ Then
 \begin{enumerate}
  \item[{\rm (1)}] $H_{I}(n)=P_{I}(n)$ for all $n> 1$.
  \item[{\rm (2)}] If $(x)$ is a minimal reduction of $I$ then $\lm(I^2/xI)=1$ and $I^3=xI^2$.
 \end{enumerate}

\end{theorem}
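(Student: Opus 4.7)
The plan is to closely mimic the preceding proof of Huneke's theorem, exploiting the formula from Theorem~\ref{1thm} together with Equation~(\ref{lm}) to convert both the hypothesis $e_1(I)=e_0(I)-\lm(R/I)+1$ and the desired conclusions into statements about the sum $\sum_{n\geq 1}\lm(I^n/xI^{n-1})$.

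First I would apply Theorem~\ref{1thm} in the Cohen-Macaulay case to write $e_1(I)=\sum_{n\geq 1}\lm(I^n/xI^{n-1})$. Since $\lm(I/J)=\lm(R/J)-\lm(R/I)=e_0(I)-\lm(R/I)$, the hypothesis translates directly into
\[
\sum_{n\geq 2}\lm\!\left(I^n/xI^{n-1}\right)=1.
\]
This is a sum of non-negative integers adding to $1$, so exactly one term equals $1$ and the rest vanish. To pin down which term is nonzero, I would use the elementary observation that if $I^n=xI^{n-1}$ for some $n\geq 2$, then $I^{n+1}=I\cdot I^n=xI^n$; consequently the set of indices $n\geq 2$ at which $\lm(I^n/xI^{n-1})>0$ is an initial segment of $\{2,3,4,\ldots\}$. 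Because the total is $1$ and nonzero, this segment must be exactly $\{2\}$, yielding $\lm(I^2/xI)=1$ and $I^n=xI^{n-1}$ for all $n\geq 3$. In particular $I^3=xI^2$, proving part (2).

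For part (1), I would feed this information into Equation~(\ref{lm}), which in dimension one reads $\bigtriangleup[P_I(n)-H_I(n)]=\lm(I^n/xI^{n-1})$. From part (2) this difference vanishes for $n\geq 3$, so $P_I(n)-H_I(n)$ is constant for $n\geq 2$. A direct computation identifies the constant: at $n=1$,
\[
P_I(1)-H_I(1)=e_0(I)-e_1(I)-\lm(R/I)=-1,
\]
and at $n=2$ we add the increment $\lm(I^2/xI)=1$, giving $P_I(2)-H_I(2)=0$. Hence $P_I(n)=H_I(n)$ for every $n\geq 2$, which is the statement (1).

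I do not expect any serious obstacle; the main subtlety is the monotonicity remark that $I^n=xI^{n-1}$ forces $I^{n+1}=xI^n$, which is what allows us to conclude that the unique nonzero term in the sum occurs at $n=2$ rather than at some larger index. Everything else is a routine unwinding of the Chern-number formula together with the values of $P_I(1)$ and $H_I(1)$.
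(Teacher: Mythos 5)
Your proposal is correct and follows essentially the same route as the paper: it converts the hypothesis via Theorem~\ref{1thm} into $\sum_{n\geq 2}\lm(I^n/xI^{n-1})=1$, locates the nonzero term at $n=2$, and then uses Equation~(\ref{lm}) with the values $P_I(1)-H_I(1)=-1$ and $P_I(2)-H_I(2)=0$ to get $H_I(n)=P_I(n)$ for $n>1$. In fact you make explicit the small point the paper leaves implicit, namely that $I^n=xI^{n-1}$ forces $I^{n+1}=xI^n$, so the unique nonzero term must occur at $n=2$; this is a welcome clarification and also the reason the result is stated for the $I$-adic filtration rather than an arbitrary admissible one.
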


\begin{proof}
 From Theorem \ref{1thm} we have $e_1(I)=\displaystyle{\sum_{n=1}^{\infty}}\lm\left( \dfrac{I^{n}}{JI^{n-1}}\right) $. Since $$e_1(I)=e_0(I)-\lm(R/I)+1=\lm(R/J)-\lm(R/I)+1=\lm(I/J)+1$$ we get
 $$\displaystyle{\sum_{n\geq 2}}\lm(I^n/JI^{n-1})=1 ~~\mbox{ and } ~~\lm(I^2/JI)=1 ~~\mbox{ and }~~ I^3=JI^2.$$
 From Equation (\ref{lm}) we have $\bigtriangleup [P_I(n)-H_I(n)]=0$ for all $n\geq 3$ and $\lm(I^2/xI)=1$. Note that $$P_I(n)-H_I(n)=P_I(n-1)-H_I(n-1)$$ for all $n\geq 3$ and $$P_I(2)-H_I(2)=P_I(1)-H_I(1)+1=e_0(I)-e_1(I)-\lm(R/I)+1=0$$
 Hence $H_I(n)=P_I(n)$ for all $n>1$.
\end{proof}

\section{Applications of the Formula in dimension two}
In this section we derive the formula for the Chern number in dimension 2 as a consequence of Theorem \ref{cor1} and as an application derive Huneke's fundamental Lemma.  

\begin{proposition}\label{pro1}
 Let $(R,\m)$ be a $2$-dimensional local ring and let $J=(x,y)$ be a parameter ideal. Let $\mathcal J=\{J_n\}$ be a $J$-admissible filtration  with $\depth G(\mathcal J)\geq 1$ and $x^*\in J_1/J_2$ be a nonzerodivisor in $G (\mathcal J)$. Let $K_.^{(n)}(\mathcal J)$ be the subcomplex of the Koszul complex $K.(x,y)$  defined as :
 $$0\longrightarrow J_{n-2} \xrightarrow{f_n=(y,x)} (J_{n-1})^2 \xrightarrow{g_n=\left( \begin{array}{r}   -x \\ y  \end{array} \right) } J_{n}\longrightarrow 0 .$$
 Then $$H_1(K_.^{(n)}(\mathcal J))\cong\frac{(x:y)\cap J_{n-1}}{(x)J_{n-2}}.$$
\end{proposition}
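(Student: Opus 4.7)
The plan is to define the projection onto the second coordinate $\phi:\ker g_n \to R$, show its image is exactly $(x:y)\cap J_{n-1}$, and then check that the induced map on $H_1$ is an isomorphism onto $((x:y)\cap J_{n-1})/(x)J_{n-2}$.

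First I would unpack the complex: $\ker g_n = \{(a,b) \in (J_{n-1})^2 : xa = yb\}$, and the image of $f_n$ equals $\{(ya, xa) : a \in J_{n-2}\}$. Two preliminary observations streamline the rest. Since $\mathcal J$ is $J$-admissible, $J_n \subseteq J^{n-k}$ for some fixed $k$, so $\bigcap_n J_n \subseteq \bigcap_m J^m = 0$ by Krull's intersection theorem. Combined with $x^*$ being a nonzerodivisor on $G(\mathcal J)$, this forces $x$ itself to be a nonzerodivisor on $R$: if $xr = 0$ with $r\ne 0$, pick the unique $k$ with $r\in J_k \setminus J_{k+1}$; then the initial form $r^*\ne 0$ in $G(\mathcal J)_k$ while $x^*r^* = 0$, a contradiction. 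Moreover, the nonzerodivisor hypothesis on $x^*$ is precisely the family of identities $(J_{k+2}:x)\cap J_k = J_{k+1}$ for every $k\ge 0$.

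Next I would define $\phi:\ker g_n \to R$ by $\phi(a,b)=b$. Because $xa = yb$, one has $\phi(a,b)\in (x:y)\cap J_{n-1}$, and a direct computation gives $\phi(\operatorname{image} f_n) = xJ_{n-2} = (x)J_{n-2}$. For surjectivity onto $(x:y)\cap J_{n-1}$, take any such $b$; since $x$ is a nonzerodivisor, there is a unique $c\in R$ with $xc = yb$, and I claim $c\in J_{n-1}$. Indeed, $xc = yb \in J_1\cdot J_{n-1}\subseteq J_n$, and then iterating $(J_{k+2}:x)\cap J_k = J_{k+1}$ starting from $c\in R = J_0$ with $xc\in J_2$ gives $c\in J_1$; then $xc\in J_3$ gives $c\in J_2$; and so on until $c\in J_{n-1}$. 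Hence $(c,b)\in \ker g_n$ maps to $b$.

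Finally I would verify that the induced surjection $\bar\phi : H_1(K_.^{(n)}(\mathcal J)) \twoheadrightarrow ((x:y)\cap J_{n-1})/(x)J_{n-2}$ is injective. If $(a,b)\in\ker g_n$ and $b = xa'$ for some $a'\in J_{n-2}$, then $xa = yb = x(ya')$, and cancelling $x$ (a nonzerodivisor) gives $a = ya'$, so $(a,b) = (ya', xa') = f_n(a')$ lies in the image of $f_n$. This closes the argument. I expect the main obstacle to be the surjectivity step, specifically the iterative promotion $c\in J_0 \Rightarrow c\in J_1 \Rightarrow \cdots \Rightarrow c\in J_{n-1}$ powered by the superficial-type hypothesis on $x^*$; the rest is diagram chasing.
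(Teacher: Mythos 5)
Your proof is correct and follows essentially the same route as the paper: project $\ker g_n$ onto the second coordinate, use the regularity of $x$ together with the condition $(J_{k+2}:x)\cap J_k=J_{k+1}$ coming from $x^*$ being $G(\mathcal J)$-regular to identify $\ker g_n$ with $(x:y)\cap J_{n-1}$ and $\operatorname{image} f_n$ with $(x)J_{n-2}$. The only difference is that you spell out details the paper merely asserts (that $x$ itself is a nonzerodivisor, via $\bigcap_n J_n=0$, and the promotion $c\in J_{n-1}$ by iterating the graded condition), which is a welcome refinement rather than a change of method.
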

\begin{proof}
 In order to calculate the first homology of the above complex we calculate $\ker g_n$ and $\im f_n$. 
 \begin{eqnarray*}
  \ker g_n &=& \{(a,b)\in (J_{n-1})^2\mid -ax+by=0\}\\
           &=& \{(a,b)\in (J_{n-1})^2\mid ax=by\}
% &=& \{(a,b)\in (J_{n-1})^2\mid b\in(x:y) ~\mbox{ and } ~a\in %(J_{n}:x)\}
 \end{eqnarray*}
 Observe that $\ker g_n\cong (x:y)\cap J_{n-1}$ through the map $(a,b)\mapsto b$.
 Indeed, as  $x$ is a regular element, for each $b\in(x:y)$ there exists a unique $a\in R$ such that $ax=by$ and since $x^*$ is $G(\mathcal J)$- 
regular $J_{n}:x=J_{n-1}$ for all $n\geq 2$, so $a\in J_{n-1}$. Thus $\ker g_n\cong (x:y)\cap J_{n-1}$.
 Notice that $\im f_n=\{(cy,cx)|c\in J_{n-2}\}$. Since $\im f_n \subseteq \ker g_n$, we have $\im f_n\cong (x)J_{n-2}.$
Hence $$H_1(K_.^{(n)}(\mathcal J))\cong\dfrac{(x:y)\cap J_{n-1}}{(x)J_{n-2}}.$$ 
\end{proof}

\begin{theorem}\label{dm2}
 Let $(R,\m)$ be a $2$-dimensional local ring and $J=(x,y)$ be a parameter ideal. Let $\mathcal J=\{J_n\}$ be a $J$-admissible filtration and 
$\depth G(\mathcal J)\geq 1$ and $x^*\in J_1/J_2$ be a nonzerodivisor in $G(\mathcal J)$. Then
 $$ e_1(\mathcal J)=\sum_{n\geq 1}\left[ \lm\left( \dfrac{{J_{n}}}{J{J_{n-1}}}\right) -\lm\left( \dfrac{(x:y)\cap {J_{n-1}}}{(x){J_{n-2}}}\right)\right] $$
 %where $v_n=\lm\left( \dfrac{{J_{n+2}}}{J{J_{n+1}}}\right) -\lm\left( \dfrac{(x:y)\cap {J_{n+1}}}{(x){J_n}}\right) $
\end{theorem}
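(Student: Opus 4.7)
The plan is to apply Theorem~\ref{cor1} in dimension two with the minimal reduction $\underbar{x} = (x,y)$. Since $\mathcal J$ is $J$-admissible, $\mathcal R(\mathcal J)$ is a finite $\mathcal R(J)$-module, which forces $JJ_n = J_{n+1}$ for all large $n$; as $J$ is a $2$-generated parameter ideal in a two-dimensional ring, it is a minimal reduction of $\mathcal J$. Theorem~\ref{cor1} then yields
$$e_1(\mathcal J) = \sum_{n=1}^{\infty}\chi(K_.^{(n)}(\mathcal J)),$$
so it remains to evaluate $\chi(K_.^{(n)}) = \lm(H_0) - \lm(H_1) + \lm(H_2)$ for the three-term complex of Proposition~\ref{pro1}.

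For $H_0$, the image of $g_n \colon (J_{n-1})^2 \to J_n$, $(a,b) \mapsto -ax + by$, is $xJ_{n-1} + yJ_{n-1} = JJ_{n-1}$, so $H_0(K_.^{(n)}(\mathcal J)) = J_n / JJ_{n-1}$. The middle homology is handled by Proposition~\ref{pro1}, which identifies $H_1(K_.^{(n)}(\mathcal J))$ with $((x:y) \cap J_{n-1})/(x)J_{n-2}$. Substituting these into the Euler characteristic reduces the theorem to the vanishing $H_2 = 0$.

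To establish $H_2 = \ker f_n = \{c \in J_{n-2} : xc = yc = 0\} = 0$, it suffices to show that multiplication by $x$ is injective on $J_{n-2}$. Suppose $c \in J_{n-2}$ and $xc = 0$. The nonzerodivisor hypothesis on $x^* \in G(\mathcal J)$ has the following contrapositive form: if $a \in J_m$ and $xa \in J_{m+2}$, then $a \in J_{m+1}$. Applying this iteratively to $c$, starting from $m = n-2$ and noting that $xc = 0 \in J_{m+2}$ trivially, lifts $c$ into $J_m$ for every $m$. Admissibility supplies $k$ with $J_m \subseteq J^{m-k}$ for large $m$, and Krull's intersection theorem gives $\bigcap_m J^{m-k} = 0$, whence $c = 0$. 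Assembling the three homology computations produces the stated formula.

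The main obstacle, modest as it is, sits in this last step: the hypothesis that $x^*$ is a nonzerodivisor on $G(\mathcal J)$ is a graded statement about successive quotients $J_m/J_{m+1}$, and converting it into the ungraded injectivity of multiplication by $x$ on each $J_{n-2}$ requires both the inductive use of the graded condition and the Krull intersection argument; the remaining identifications of $H_0$ and $H_1$ are then direct.
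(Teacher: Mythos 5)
Your proposal is correct and follows essentially the same route as the paper: apply Theorem~\ref{cor1} to the minimal reduction $(x,y)$, identify $H_0$ with $J_n/JJ_{n-1}$, invoke Proposition~\ref{pro1} for $H_1$, and kill $H_2=(0:J)\cap J_{n-2}$ by showing $x$ is a nonzerodivisor on $R$. The only difference is cosmetic: the paper asserts in one line that $x^*$ regular on $G(\mathcal J)$ makes $x$ a nonzerodivisor, whereas you spell out the iterative lifting through the filtration plus Krull intersection, which is a welcome but not essentially different elaboration.
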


\begin{proof}

By Theorem \ref{cor1} we have
$$e_1(\mathcal J)=\sum_{n\geq 1}\chi(K^{(n)}(\mathcal J)).$$
Notice that $$H_0(K^{(n)}(\mathcal J))=J_{n}/JJ_{n-1}$$ and $$H_2(K^{(n)}(\mathcal J))=(0:J)\cap J_{n-2}.$$
Since $\depth G(\mathcal J)\geq 1$ and $x^*$ is a nonzerodivisor so $x$ is a nonzerodivisor  and hence $H_2(K_.)=0.$ Therefore $H_2(K^{(n)}(\mathcal J))=0$
for all $n$. By Proposition \ref{pro1} we have
$$H_1(K_.^{(n)}(\mathcal J))\cong\frac{(x:y)\cap {J_{n-1}}}{(x)J_{n-2}}.$$ Thus 
$$ e_1(\mathcal J)=\sum_{n\geq 1}\left[ \lm\left( \dfrac{{J_{n}}}{J{J_{n-1}}}\right) -\lm\left( \dfrac{(x:y)\cap {J_{n-1}}}{(x){J_{n-2}}}\right)\right] .$$
\end{proof}

\begin{corollary}\label{ghun}
 Let $(R,\m)$ be a $2$-dimensional analytically unramified local ring and $J=(x,y)$ be a parameter ideal. Then
 $$\ov e_1(J)=\sum_{n\geq 1}\left[ \lm\left( \dfrac{\ov{J^{n}}}{J\ov{J^{n-1}}}\right) -\lm\left( \dfrac{(x:y)\cap \ov{J^{n-1}}}{(x)\ov{J^{n-2}}}\right)\right] $$
 where $\ov J$ denote the integral closure of $J$.
\end{corollary}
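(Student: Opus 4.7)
The corollary should follow immediately from Theorem \ref{dm2} applied to the integral closure filtration $\mathcal J=\{\ov{J^n}\}_{n\in\mathbb Z}$. The strategy is to verify the three hypotheses of Theorem \ref{dm2} for this filtration and then substitute $J_n=\ov{J^n}$ into its conclusion; the resulting identity is exactly the one claimed, since by definition $e_1(\{\ov{J^n}\})=\ov e_1(J)$.

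The first hypothesis, namely that $\{\ov{J^n}\}$ is a $J$-admissible filtration, is immediate from Rees's theorem recalled in the introduction: because $R$ is analytically unramified, $\mathcal R(\{\ov{J^n}\})$ is module-finite over $\mathcal R(J)$, which is the defining condition of $J$-admissibility. So this hypothesis comes for free from the assumption that $R$ is analytically unramified.

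The remaining two hypotheses of Theorem \ref{dm2}, namely $\depth G(\mathcal J)\geq 1$ and the existence of a generator $x$ of $J$ whose leading form $x^\ast\in\ov J/\ov{J^2}$ is a nonzerodivisor in $G(\mathcal J)=\bigoplus_n\ov{J^n}/\ov{J^{n+1}}$, should be arranged by a standard superficial-element argument. Since $R/\m$ is infinite, I would choose $x\in J$ to be a superficial element for the filtration $\{\ov{J^n}\}$ and then extend to a minimal generating set $J=(x,y)$. Using that $R$ is analytically unramified together with an Artin--Rees reduction on the lower-dimensional associated primes (of the type already used in the proof of Rees's theorem in Section~2), one can ensure that $x$ is a nonzerodivisor on $R$; superficiality then promotes this to the statement that $x^\ast$ is a nonzerodivisor in $G(\mathcal J)$, which in particular forces $\depth G(\mathcal J)\geq 1$.

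The main obstacle is precisely this verification: in the absence of a Cohen--Macaulay hypothesis on $R$, one must be careful to choose $x$ simultaneously as a regular element of $R$ and as a superficial element of the integral closure filtration, and then to check that superficiality really does lift to the nonzerodivisor property of $x^\ast$ in $G(\mathcal J)$. Once this technicality is in place, Theorem \ref{dm2} applies verbatim with $J_n=\ov{J^n}$, and substituting this into its formula
$$e_1(\mathcal J)=\sum_{n\geq 1}\left[\lm\left(\dfrac{J_n}{JJ_{n-1}}\right)-\lm\left(\dfrac{(x:y)\cap J_{n-1}}{(x)J_{n-2}}\right)\right]$$
gives exactly the claimed expression for $\ov e_1(J)$.
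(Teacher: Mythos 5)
Your overall strategy is the same as the paper's: invoke Rees's theorem (analytically unramified $\Rightarrow$ $\{\ov{J^n}\}$ is a $J$-admissible filtration) and then apply Theorem \ref{dm2} with $J_n=\ov{J^n}$. The gap is in your verification of the remaining hypotheses of Theorem \ref{dm2}. You choose $x$ superficial for $\{\ov{J^n}\}$ and a nonzerodivisor on $R$, and then assert that superficiality ``promotes'' this to $x^*$ being a nonzerodivisor in $G(\mathcal J)$, which ``in particular forces'' $\depth G(\mathcal J)\geq 1$. This is circular: for a superficial nonzerodivisor $x$, the initial form $x^*$ is $G(\mathcal J)$-regular \emph{if and only if} $\depth G(\mathcal J)>0$, so one needs the depth bound first in order to conclude anything about $x^*$. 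Superficiality by itself only gives $\ov{J^{n+1}}:x=\ov{J^n}$ for all large $n$, while $G$-regularity of $x^*$ requires the corresponding statement for every $n$; and there are $\m$-primary ideals (even in Cohen--Macaulay rings) whose associated graded ring has depth zero, so no generic choice of superficial element can manufacture $\depth G\geq 1$. The positivity of the depth here is a genuinely special property of the integral closure filtration in dimension two, of Ratliff--Itoh type, and this is exactly the external input the paper uses: it quotes Marley's result (\cite[Theorem 3.25]{m}) that $\depth G(\{\ov{J^n}\})\geq 1$, and only then applies Theorem \ref{dm2}. Your proof as written is missing precisely this ingredient.

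A secondary point: the formula in Corollary \ref{ghun} is stated for the given generators $x,y$ of $J$ (the term $(x:y)\cap\ov{J^{n-1}}/(x)\ov{J^{n-2}}$ depends on them), so if you replace $x$ by a ``sufficiently general'' superficial element you are no longer proving the stated identity but a variant of it; any argument along your lines would have to address why the particular $x$ in the statement (or a harmless relabelling) satisfies the nonzerodivisor hypothesis on $x^*$, which again comes back to the depth statement above rather than to genericity.
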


\begin{proof}
 Since $R$ is analytically unramified, $\mathcal J=\{\ov{J^n}\}$ is a $J$-admissible filtration and by \cite[Theorem 3.25]{m} $\depth G(\mathcal F)\geq 1$. Hence by Theorem \ref{dm2} we are done.
\end{proof}

Next we derive Huneke's Fundamental Lemma for $I$-admissible filtrations. For this purpose we need the following Lemma.
\begin{lemma}\label{l3}{\rm\cite[Lemma 3.2]{hm}}
Let $(R,\m)$ be a $d$-dimensional local ring. Let $I$ be an $\m$-primary ideal and $\mathcal I=\{I_n\}_{n\in \mathbb Z}$ be an $I$-admissible filtration. Let $x_1,\ldots ,x_d$ be elements of $I_1$. Consider the complex $C_.(n,\mathcal I)$ 
$$0\longrightarrow R/I_{n-d}\longrightarrow (R/I_{n-d+1})^{d\choose 1}\longrightarrow \cdots \longrightarrow R/I_{n}\longrightarrow 0.$$
Then for $n\in \mathbb Z$, the following hold:
 \begin{enumerate}
  \item[{\rm (1)}] $H_d(C_.(n,\mathcal I))=\dfrac{(I_{n-d+1}:(x_1,\ldots ,x_d))}{I_{n-d}}.$
 \item[{\rm  (2)}] $H_0(C_.(n,\mathcal I))=R/(I_{n}+(x_1,\ldots ,x_d)).$
  \item[{\rm (3)}] if $x_1,\ldots ,x_d$ is an $R$-regular sequence then
   $$H_{1}(C_.(n,\mathcal I))\cong \dfrac{(x_1,\ldots ,x_d)\cap I_{n}}{(x_1,\ldots ,x_d)I_{n-1}}.$$
 \end{enumerate}
\end{lemma}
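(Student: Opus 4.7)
The plan is to treat the three parts separately. Parts (1) and (2) are direct computations at the two ends of $C_.(n,\mathcal I)$, while part (3) requires a long exact sequence argument that draws on the acyclicity of the Koszul complex when $x_1,\ldots,x_d$ is a regular sequence.

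For part (1), I would write down the leftmost differential $R/I_{n-d} \to (R/I_{n-d+1})^d$, which is induced by the top Koszul differential and therefore sends $a + I_{n-d}$ to the tuple $(x_1 a,\, -x_2 a,\,\ldots,\,(-1)^{d-1} x_d a)$, each coordinate taken modulo $I_{n-d+1}$. The kernel consists of classes of $a \in R$ with $x_i a \in I_{n-d+1}$ for every $i$, which is precisely $(I_{n-d+1}:(x_1,\ldots,x_d))/I_{n-d}$. For part (2), the rightmost differential $(R/I_{n-1})^d \to R/I_n$ sends $(a_1,\ldots,a_d) \mapsto \sum_i x_i a_i \pmod{I_n}$, whose image in $R/I_n$ is $(I_n + (x_1,\ldots,x_d))/I_n$, and the cokernel is $R/(I_n + (x_1,\ldots,x_d))$.

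For part (3), I would apply the long exact sequence in homology to the short exact sequence (\ref{eq11})
$$0 \longrightarrow K^{(n)}(\underbar x,\mathcal I) \longrightarrow K_.(\underbar x) \longrightarrow C_.(n,\mathcal I) \longrightarrow 0.$$
Since $x_1,\ldots,x_d$ is $R$-regular, the Koszul complex $K_.(\underbar x)$ is acyclic in positive degrees, so $H_i(K_.(\underbar x)) = 0$ for $i \geq 1$. The long exact sequence collapses in the relevant range to
$$0 \longrightarrow H_1(C_.(n,\mathcal I)) \longrightarrow H_0(K^{(n)}(\underbar x,\mathcal I)) \xrightarrow{\iota} H_0(K_.(\underbar x)) \longrightarrow H_0(C_.(n,\mathcal I)) \longrightarrow 0.$$
Reading off the endpoints, $H_0(K^{(n)}(\underbar x,\mathcal I)) = I_n/(x_1,\ldots,x_d)I_{n-1}$ and $H_0(K_.(\underbar x)) = R/(x_1,\ldots,x_d)$, with $\iota$ induced by the inclusion $I_n \hookrightarrow R$. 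Its kernel is $((x_1,\ldots,x_d) \cap I_n)/(x_1,\ldots,x_d)I_{n-1}$, giving the claimed formula for $H_1(C_.(n,\mathcal I))$.

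The bulk of the work is bookkeeping: unwinding the Koszul differentials at the two extremes for parts (1) and (2), and tracing the natural inclusion through the long exact sequence in part (3). The main subtlety is precisely where the regularity hypothesis enters in part (3), namely in killing $H_1(K_.(\underbar x))$, which is what allows $H_1(C_.(n,\mathcal I))$ to be identified with the kernel of the inclusion-induced map rather than a more complicated quotient.
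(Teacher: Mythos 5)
Your proof is correct. Note that the paper itself gives no proof of Lemma \ref{l3}: it is quoted verbatim from Huckaba--Marley \cite[Lemma 3.2]{hm}, so there is no internal argument to compare against. Your computations for (1) and (2) at the two ends of $C_.(n,\mathcal I)$ are exactly what is needed (the signs in the top Koszul differential are irrelevant for identifying the kernel, and well-definedness of the induced maps uses only $x_iI_{m}\subseteq I_{m+1}$, which holds since $x_i\in I_1$). For (3), your use of the short exact sequence (\ref{eq11}) is both correct and well matched to the paper's framework: the long exact sequence gives
$$0\longrightarrow H_1(C_.(n,\mathcal I))\longrightarrow H_0(K^{(n)}(\underline x,\mathcal I))\longrightarrow H_0(K_.(\underline x))$$
once $H_1(K_.(\underline x))=0$, and identifying $H_0(K^{(n)}(\underline x,\mathcal I))=I_n/(x_1,\ldots,x_d)I_{n-1}$, $H_0(K_.(\underline x))=R/(x_1,\ldots,x_d)$ with the map induced by $I_n\hookrightarrow R$ yields the kernel $((x_1,\ldots,x_d)\cap I_n)/(x_1,\ldots,x_d)I_{n-1}$, as you say; the regularity hypothesis enters only to kill $H_1(K_.(\underline x))$. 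This is the same mechanism the paper uses elsewhere (in the proof of Huneke's fundamental lemma it runs the identical long exact sequence to identify $H_1(K^{(n)}(\mathcal I))$ with $H_2(C_.(n,\mathcal I))$), whereas the original source proves the lemma by direct manipulation of cycles and boundaries; your route is slightly slicker for (3) and makes the role of the regular-sequence hypothesis transparent.
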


 \begin{proposition}{\rm \cite[Fundamental Lemma 2.4]{hun1}}
  Let $(R,\m)$ be a $2$-dimensional Cohen-Macaulay local ring and $I$ be an $\m$-primary ideal and $\mathcal I=\{I_n\}_{n\in \mathbb Z}$ be an 
 $I$-admissible filtration. Let $J=(x,y)$ be a minimal reduction of $\mathcal I$. Then for all $n\geq 2$ 
  \begin{enumerate}
 \item[{\rm (1)}] $\bigtriangleup^2[P_{\mathcal I}(n)-H_{\mathcal I}(n)]=\lm\left( \dfrac{I_{n}}{JI_{n-1}}\right) -\lm\left( \dfrac{I_{n-1}:J}{I_{n-2}}\right)$.
  \item[{\rm (2)}] $e_1(\mathcal I)=e_0(\mathcal I)-\lm(R/I_1)+\displaystyle{\sum_{n\geq 2}}\left[ \lm\left( \dfrac{I_{n}}{JI_{n-1}}\right) -\lm\left( \dfrac{I_{n-1}:J}{I_{n-2}}\right)\right] .$ 
  \end{enumerate}
 \end{proposition}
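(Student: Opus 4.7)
The plan is to specialize the general formula from Theorem \ref{cor1} to dimension two and to identify the homologies of the complex $K^{(n)}(\underline{x},\mathcal I)$ explicitly. For $d=2$, equation (\ref{lm}) gives $\bigtriangleup^2[P_{\mathcal I}(n)-H_{\mathcal I}(n)] = \chi(K^{(n)}(\underline{x},\mathcal I))$, where the complex is
\[
0\longrightarrow I_{n-2}\xrightarrow{f_n}(I_{n-1})^2\xrightarrow{g_n}I_n\longrightarrow 0
\]
with $f_n(c)=(yc,-xc)$ and $g_n(a,b)=xa+yb$. So the task reduces to computing $H_0$, $H_1$, and $H_2$ of this complex and showing $\chi=\lambda(I_n/JI_{n-1})-\lambda((I_{n-1}:J)/I_{n-2})$.

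The easy identifications are $H_0(K^{(n)})=I_n/JI_{n-1}$ and $H_2(K^{(n)})=(0:J)\cap I_{n-2}$. Since $R$ is Cohen-Macaulay of dimension two and $J=(x,y)$ is a parameter ideal, the sequence $x,y$ is regular on $R$, so $(0:J)=0$ and $H_2(K^{(n)})=0$. The main step is to compute $H_1$. If $(a,b)\in \ker g_n$, then $xa+yb=0$; exactness of the Koszul complex of the regular sequence $x,y$ yields a unique $c\in R$ with $a=yc$ and $b=-xc$. Then $yc\in I_{n-1}$ and $xc\in I_{n-1}$ force $c\in (I_{n-1}:x)\cap(I_{n-1}:y)=I_{n-1}:J$, and conversely every $c\in I_{n-1}:J$ produces an element of $\ker g_n$. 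Uniqueness of $c$ (granted by $y$ being a nonzerodivisor) makes $(a,b)\mapsto c$ an isomorphism $\ker g_n\xrightarrow{\sim}I_{n-1}:J$. Under this isomorphism $\operatorname{im}f_n$ corresponds exactly to $I_{n-2}$, giving $H_1(K^{(n)})\cong (I_{n-1}:J)/I_{n-2}$. Combining yields part (1).

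For part (2), sum the formula of part (1) over $n\geq 1$ using $e_1(\mathcal I)=\sum_{n\geq 1}\chi(K^{(n)}(\underline{x},\mathcal I))$ from Theorem \ref{cor1}, and isolate the $n=1$ contribution. With the convention $I_n=R$ for $n\leq 0$, the $n=1$ term is
\[
\lambda(I_1/JI_0)-\lambda((I_0:J)/I_{-1}) = \lambda(I_1/J)-\lambda(R/R) = \lambda(R/J)-\lambda(R/I_1).
\]
Since $J$ is a minimal reduction of $\mathcal I$, $e_0(\mathcal I)=e_0(J)=\lambda(R/J)$ by Serre's theorem, so the $n=1$ contribution equals $e_0(\mathcal I)-\lambda(R/I_1)$ and the remaining sum is precisely the one asserted in part (2).

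The only nontrivial step is the identification $\ker g_n/\operatorname{im}f_n\cong (I_{n-1}:J)/I_{n-2}$; everything else is routine bookkeeping. That identification is the place where Cohen-Macaulayness is essential, since it is used both to get $H_2=0$ and, via Koszul exactness, to solve the relation $xa+yb=0$ and produce the preimage $c\in I_{n-1}:J$.
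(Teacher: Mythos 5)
Your proof is correct, and it follows the paper's overall strategy---specialize Theorem \ref{cor1} and Equation (\ref{lm}) to $d=2$ and identify the three homology modules of $K^{(n)}(\mathcal I)$---but it diverges at the one substantive step, the identification of $H_1$. The paper obtains $H_1(K^{(n)}(\mathcal I))\cong (I_{n-1}:J)/I_{n-2}$ homologically: from the short exact sequence of complexes (\ref{eq11}) it takes the long exact homology sequence, uses $H_1(K_.)=H_2(K_.)=0$ (Cohen--Macaulayness) to get $H_1(K^{(n)}(\mathcal I))\cong H_2(C_.(n,\mathcal I))$, and then quotes Lemma \ref{l3} of Huckaba--Marley for $H_2(C_.(n,\mathcal I))=(I_{n-1}:J)/I_{n-2}$. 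You instead compute $\ker g_n$ and $\operatorname{im} f_n$ directly: a relation $xa+yb=0$ with $a,b\in I_{n-1}$ is, by exactness of the Koszul complex of the regular sequence $x,y$, of the form $(a,b)=c(y,-x)$ with $c$ unique because $y$ is a nonzerodivisor, and then $c\in (I_{n-1}:x)\cap(I_{n-1}:y)=I_{n-1}:J$, with $\operatorname{im} f_n$ corresponding to $I_{n-2}\subseteq I_{n-1}:J$. This is in effect the same direct syzygy computation the paper carries out in Proposition \ref{pro1} in a different setting, and it buys a self-contained argument that needs neither the long exact sequence nor the citation of \cite[Lemma 3.2]{hm}; the paper's route is shorter on the page and reuses machinery already assembled for Theorem \ref{cor1}. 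Your treatment of part (2)---summing $\chi(K^{(n)}(\mathcal I))$ over $n\geq 1$ and evaluating the $n=1$ term as $\lambda(I_1/J)=\lambda(R/J)-\lambda(R/I_1)=e_0(\mathcal I)-\lambda(R/I_1)$, using $e_0(\mathcal I)=e_0(J)=\lambda(R/J)$ in the Cohen--Macaulay case---is exactly the computation left implicit in the paper's final display, so the two proofs agree there as well.
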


\begin{proof}
 From  Equation (\ref{lm}) we have
  $$\bigtriangleup^2[P_{\mathcal I}(n)-H_{\mathcal I}(n)]=\chi(K^{(n)}(\mathcal I))$$
  where $K^{(n)}(\mathcal I)$ is the  complex
  $$0\longrightarrow I_{n-2}\longrightarrow (I_{n-1})^2\longrightarrow I_{n}\longrightarrow 0.$$
  Note that $H_0(K^{(n)}(\mathcal I))=I_{n}/JI_{n-1}$ and $H_2(K^{(n)}(\mathcal I))=(0:J)\cap I_{n-2}=0$ as $R$ is Cohen-Macaulay. From the exact sequence of 
complexes (\ref{eq11}) we get the long exact sequence 
  \begin{eqnarray*}
  0 &\longrightarrow H_2(K^{(n)}(\mathcal I))\longrightarrow H_2(K_.)\longrightarrow H_2(C_.(n,\mathcal I))\longrightarrow\\
    & \longrightarrow H_1(K^{(n)}(\mathcal I))\longrightarrow H_1(K_.)\longrightarrow H_1(C_.(n,\mathcal I))\longrightarrow
    \end{eqnarray*}
 Since $R$ is Cohen-Macaulay $H_1(K_.)=H_2(K_.)=0$. Thus $$H_1(K^{(n)}(\mathcal I))\cong H_2(C_.(n,\mathcal I))=\frac{I_{n-1}:J}{I_{n-2}}~~~~~\mbox{ (by Lemma \ref{l3}) }$$ 
 Hence $$\bigtriangleup^2[P_{\mathcal I}(n)-H_{\mathcal I}(n)]=\lm\left( \dfrac{I_{n}}{JI_{n-1}}\right) -\lm\left( \dfrac{I_{n-1}:J}{I_{n-2}}\right).$$
%   Now by Lemma \ref{l3} we have 
%   $$H_2(C_.(n,\mathcal I))=\frac{I_{n+1}:J}{I_{n}}$$
%   $$H_1(C_.(n,\mathcal I))=\frac{J\cap I_{n+2}}{JI_{n+1}}$$
%   $$H_0(C_.(n,\mathcal I))=\frac{R}{J+I_{n+2}}$$
%   Since $J$ is a reduction of $\mathcal I$ we have $e_0(\mathcal I)=\lm(R/J)$. Therefore we have
%   \begin{eqnarray*}
%    \bigtriangleup^2[P_{\mathcal I}(n)-H_{\mathcal I}(n)]&=& \lm(R/J)-\lm\left( \frac{I_{n+1}:J}{I_{n}}\right) +\lm\left( \frac{J\cap I_{n+2}}{JI_{n+1}}\right) -\lm\left( \frac{R}{J+I_{n+2}}\right) \\
%    &=&\lm\left( \frac{J+I_{n+2}}{J}\right)+\lm\left( \frac{J\cap I_{n+2}}{JI_{n+1}}\right)-\lm\left( \frac{I_{n+1}:J}{I_{n}}\right)\\
%    &=& \lm\left( \frac{I_{n+2}}{J\cap I_{n+2}} \right)+ \lm\left( \frac{J\cap I_{n+2}}{JI_{n+1}}\right)-\lm\left( \frac{I_{n+1}:J}{I_{n}}\right)\\
%    &=& \lm\left( \frac{I_{n+2}}{JI_{n+1}}\right)-\lm\left(\frac{I_{n+1}:J}{I_{n}} \right)    
%   \end{eqnarray*}
Now from Theorem \ref{cor1} we have
\begin{eqnarray*}
%e_1^K(I)&=&e_0-[H(0)g_0(1)+H(1)g_1(1)]+\sum_{n\geq 0}v_n\\
e_1(\mathcal I)= e_0(\mathcal I)-\lm(R/I_1)+\sum_{n\geq 2}\left[ \lm\left( \dfrac{I_{n}}{JI_{n-1}}\right) -\lm\left( \dfrac{I_{n-1}:J}{I_{n-2}}\right)\right] .
\end{eqnarray*}
% and 
% \begin{eqnarray*}
%  e_2^K(I)&=&-H(0)g_0'(1)-H(1)g_1(1)-H(1)g_1'(1)+H(0)g_0(1)+H(1)g_1(1)+\sum_{n\geq 0}(n+1)v_n\\
%  &=&-\lm(R/K)+2\lm(R/K)+\sum_{n\geq 0}(n+1)v_n\\
%  &=&\sum_{n\geq 0}(n+1)v_n+\lm(R/K).
%\end{eqnarray*}

 \end{proof}

\end{document}